\newcommand{\R}{\mathbb{R}}
\DeclareMathOperator{\rank}{rank}
\DeclareMathOperator{\Vect}{vec}
\DeclareMathOperator{\sgn}{sgn}
\newcommand{\s}{\bm{s}}
\newcommand{\p}{\bm{p}}
\newcommand{\hb}{\mathbf{h}}
\newcommand{\vb}{\mathbf{v}}
\newcommand{\wb}{\mathbf{w}}
\newcommand{\xb}{\mathbf{x}}
\newcommand{\zb}{\mathbf{z}}
\newcommand{\qb}{\mathbf{q}}
\newcommand{\phib}{\bm{\phi}}
\newcommand{\Hb}{\mathbf{H}}
\newcommand{\Pb}{\mathbf{P}}
\newcommand{\AC}{\mathcal{I}}
\definecolor{gray}{RGB}{128,128,128}
\newenvironment{proof}[1][Proof]%
  {\smallskip\par\noindent\textbf{#1\,:\ }}%
  {\hspace*{\fill} \rule{6pt}{6pt}\smallskip}
\newenvironment{proof*}[1][Proof]%
  {\medskip\par\noindent\textbf{#1\,:\ }}%
\newtheorem{remark}{\textbf{Remark}}
\newtheorem{assumption}{\emph{\textbf{Assumption}}}
\newtheorem{theorem}{\textbf{Theorem}}
\newtheorem{lemma}{\textbf{Lemma}}
\newtheorem{proposition}{\textbf{Proposition}}
\let\Algorithm\algorithm
\renewcommand\algorithm[1][]{\Algorithm[#1]\setstretch{1.4}}
\title{A Finite-Time Algorithm for the Distributed Tracking of Maneuvering Target}
\author{Jemin~George\\
CCDC Army Research Laboratory\\ Adelphi, MD 20783, USA}
\begin{document}


%
\maketitle
\begin{abstract}
This paper presents a novel distributed algorithm for tracking a maneuvering target using bearing or direction of arrival measurements collected by a networked sensor array. The proposed approach is built on the dynamic average-consensus algorithm, which allows a networked group of agents (nodes) to reach consensus on the global average of a set of local time-varying signals in a distributed fashion. Since the average-consensus error corresponding to the presented dynamic average-consensus algorithm converges to zero in finite time, the proposed distributed algorithm guarantees that the tracking error converges to zero in finite time. Numerical simulations are provided to illustrate the effectiveness of the proposed algorithm.
\end{abstract}
\begin{keywords}
Distributed target tracking, sensor network, dynamic average consensus, finite-time algorithm
\end{keywords}

\section{Introduction}

We study the problem of distributedly tracking a maneuvering target using direction of arrival or bearing measurements. This problem arises in numerous surveillance and reconnaissance applications where a set of networked sensors are tasked with jointly tracking a maneuvering target without the aid of a centralized fusion node to pool all the local observations.  Tracking a maneuvering target is a formidable problem because it is impossible to come up with a single motion model that can account for all possible target maneuvers.  Thus the current solution to tracking a maneuvering target involves multiple-model methods, where a bank of motion models are used to approximate the target motion (e.g., Multiple Model Adaptive Estimator (MMAE)~\cite{1982stochastic, Maybeck1985, Maybeck95}, Interacting Multiple Model (IMM) estimator~\cite{Blom1984, Mazor98IMM, Blom1998, Kirubarajan2003}, Variable Structure Multiple Model (VSMM) estimator~\cite{LiCDC1994, Li96VSMM, LiTAC2000}). Though there exist several distributed implementations of multiple model estimators~\cite{Hong2004TAC, Athalye2006, Li2012TAC, Wang2016CDC, Wang2016Access, Yu2016, Yu2017}, they are unable to precisely recover the performance of the centralized algorithm due to the inability of distributed consensus methods to instantaneously reach agreement on the mode-dependent target dynamics and the innovations process. Moreover, even the centralized multiple model estimators are not guaranteed to precisely track a highly maneuvering target.

This paper presents a novel distributed algorithm that allows the networked agents (sensors) to precisely track a highly maneuvering target from bearing measurements. More precisely, the proposed algorithm guarantees that the tracking error converges to zero in finite time. The proposed approach is built on recent advances in \emph{dynamic average-consensus} algorithms~\cite{WeiRen2012TAC, Kia2014, George2017ACC, George2017CDC, George2018arXiv, george2018distributed, george2018distributed, yang2018distributed} that allow individual nodes to estimate the global average of the local time-varying signals of interest. Though there exist numerous dynamic average consensus algorithms, the approach proposed here is robust to network dynamics~\cite{du2018distributed, george2019robust, george2019distributed, burbano2019inferring, george2019distributed, george2019fast, george2019distributedDL, zhang2019computational, yang2020distributed, du2019event}. Compared to the centralized approach, the proposed distributed scheme is more robust to network disruptions, avoids a single point of failure, and is easily scalable with the number of agents in the network.

The rest of this paper is organized as follows. Mathematical preliminaries and the detailed problem formulation are given in Sections \ref{sec:pre} and \ref{sec:prob}, respectively. Main results of the paper are given in Section \ref{sec:Main}. Section \ref{sec:sim} provides the results obtained from numerical simulations. Conclusions and future work are discussed in Section~\ref{sec:con}. 

\section{Preliminaries}\label{sec:pre}
\vspace{-2mm}

Let $\mathbb{R}^{n\times m}$ denote the set of $n\times m$ real matrices. An $n\times n$ identity matrix is denoted as $I_n$ and $\mathbf{1}_n$ denotes an $n$-dimensional vector of all ones. For two vectors $\mathbf{x} \in \mathbb{R}^n$ and $\mathbf{y} \in \mathbb{R}^n$, $\mathbf{x} \geq  \mathbf{y} \,\,\left(\mathbf{x} \leq \mathbf{y} \right)$ implies $x_i \geq  y_i, \,\, \left({x}_i \leq {y}_i \right)$, $\forall\,i\in\{1,\ldots,n\}$. The absolute value of a vector is given as $|\mathbf{x}| = \begin{bmatrix} |x_1| & \ldots & |x_n|\end{bmatrix}^T$. Let $\text{sgn}\{\cdot\}$ denote the signum function, 
and $\forall\,\mathbf{x}\in\mathbb{R}^n$, $\text{sgn}\{\mathbf{x}\} \triangleq \begin{bmatrix} \text{sgn}\{x_1\} & \ldots & \text{sgn}\{x_n\}\end{bmatrix}^T$. For $p\in[1,\,\infty]$, the $p$-norm of a vector $\mathbf{x}$ is denoted as $\left\| \mathbf{x} \right\|_p$. For matrices $A \in \mathbb{R}^{m\times n}$ and $B \in \mathbb{R}^{p \times q}$, $A \otimes B \in \mathbb{R}^{mp \times nq}$ denotes their Kronecker product.


For an \emph{undirected} graph $\mathcal{G}\left(\mathcal{V},\mathcal{E}\right)$ of order $n$, $\mathcal{V} \triangleq \left\{v_1, \ldots, v_n\right\}$ represents the sensors or nodes. The communication links between the sensors are represented as $\mathcal{E} \triangleq \left\{e_1, \ldots, e_{\ell}\right\} \subseteq \mathcal{V} \times \mathcal{V}$. Here each undirected edge is considered as two distinct directed edges and the edges are labeled such that they are grouped into incoming links to nodes $v_1$ to $v_n$. Let $\mathcal{I}$ denote the index set $\{1,\ldots,n\}$ and $\forall i \in \mathcal{I}$; let $\mathcal{N}_i \triangleq \left\{v_j \in \mathcal{V}~:~(v_i,v_j)\in\mathcal{E}\right\}$ denote the set of neighbors of node $v_i$. Let $\mathcal{A} \triangleq \left[a_{ij}\right]\in \{0,1\}^{n\times n}$ be the \emph{adjacency matrix} with entries $a_{ij} = 1 $ if $(v_i,v_j)\in\mathcal{E}$ and zero otherwise. Define $\Delta \triangleq \text{diag}\left(\mathcal{A}\mathbf{1}_n\right)$ as the degree matrix associated with the graph and $\mathcal{L} \triangleq \Delta - \mathcal{A}$ as the graph \emph{Laplacian}. The \emph{incidence matrix} of the graph is defined as $\mathcal{B} = \left[b_{ij}\right]\in \left\{-1,0,1\right\}^{n\times \ell}$, where $b_{ij} = -1$ if edge $e_j$ leaves node $v_i$, $b_{ij} = 1$ if edge $e_j$ enters node $v_i$, and $b_{ij} = 0$ otherwise. 

\section{Problem Formulation}\label{sec:prob}
\vspace{-2mm}
Consider the problem of tracking a maneuvering target using a stationary sensor network of $n$ sensors located at positions, $\s_{i}\in\R^2$, $i\in\AC$. The sensor positions are locally known to each node. We model the sensor network as an undirected graph $\mathcal{G}\left(\mathcal{V},\mathcal{E}\right)$ of order $n$, where the nodes represent the sensors and the edges denote the communication links between them. Sensors $v_i$ and $v_j$ are (one-hop) neighbors if $(v_i,v_j)\in\mathcal{E}$. We assume that all sensor are synchronized with a common clock and each sensor can only communicate with its neighboring sensors. Sensors obtain bearing observations to a maneuvering target at position $\p(t)\in\mathbb{R}^2$, whose kinematics is given as
\begin{align}\label{target}
  \dot{\p}(t) &= \vb(t),
\end{align}
where $\vb(t)\in\mathbb{R}^2$ is the target velocity. The bearing measurements are represented as unit vectors $\bm{\varphi}_i(t) \in \mathcal{S}^1$ of the form\footnote{Here $\mathcal{S}^1$ denotes the set of unit-norm vectors in $\mathbb{R}^2$.}
\begin{align}\label{observation}
  \bm{\varphi}_i(t) &=  
  \frac{ \p(t)-\s_i }{\| \p(t)-\s_i \|_2},\qquad i\in\AC.
\end{align}
Let $\theta_i(t) \in [0,2\pi)$ denote the bearing angle, measured positive counter-clockwise, measured by the $i$-th agent, and define $\rho_i(t) = {\| \p(t)-\s_i \|_2}$. Thus
$\bm{\varphi}_i(t) = \begin{bmatrix} \cos\left(\theta_i(t)\right) & \sin\left(\theta_i(t)\right)\end{bmatrix}^\top,$ and
\begin{equation}\label{rhoEqn1}
  \rho_i(t)\bm{\varphi}_i(t) = \p(t)-\s_i.
\end{equation}
Therefore, the distributed target tracking problem consists of each sensor estimating the target trajectory $\p(t)$ from its own bearing measurements and any information obtained from its neighbors as defined by the network topology.
\begin{proposition}\label{Prop1}
For any $i\in\AC$ and $\theta_i(t) \in [0,2\pi)$, let $\bm{\varphi}_i(t) = \begin{bmatrix} \cos\left(\theta_i(t)\right) & \sin\left(\theta_i(t)\right)\end{bmatrix}^\top$ and let $\bar{\bm{\varphi}}_i(t) \in \mathcal{S}^1$ be an orthonormal vector obtained by rotating $\bm{\varphi}_i(t)$ by $\pi/2$ radians clockwise. Then
\begin{enumerate}[(i)~]
    \setlength\itemsep{1em}
  \item $\bar{\bm{\varphi}}_i(t) = \begin{bmatrix} -\sin\left(\theta_i(t)\right) & \cos\left(\theta_i(t)\right)\end{bmatrix}^\top$, and
  \item $\bm{\varphi}_i(t) \bm{\varphi}^\top_i(t) + \bar{\bm{\varphi}}_i(t)\bar{\bm{\varphi}}^\top_i(t) = I_2$.
\end{enumerate}
\end{proposition}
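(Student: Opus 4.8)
The plan is to treat the two parts separately, since (i) is a direct application of a planar rotation and (ii) is an immediate consequence of the orthonormality that (i) exhibits. For part (i), I would represent the prescribed $\pi/2$ rotation by the standard planar rotation matrix acting on $\bm{\varphi}_i(t) = \begin{bmatrix} \cos\theta_i(t) & \sin\theta_i(t)\end{bmatrix}^\top$, which amounts to shifting the bearing angle $\theta_i(t)$ by $\pi/2$. Invoking the elementary identities $\cos(\theta+\pi/2) = -\sin\theta$ and $\sin(\theta+\pi/2) = \cos\theta$ then collapses the entries to the claimed $\bar{\bm{\varphi}}_i(t) = \begin{bmatrix} -\sin\theta_i(t) & \cos\theta_i(t)\end{bmatrix}^\top$. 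This is a one-line trigonometric simplification; the only point requiring attention is fixing the orientation convention (recall $\theta_i$ is measured positive counter-clockwise) so that the rotation matrix matches the stated direction and lands on the correct sign.

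For part (ii), the key observation is that (i) already shows $\bm{\varphi}_i(t)$ and $\bar{\bm{\varphi}}_i(t)$ form an orthonormal frame of $\R^2$: each is of the form $\begin{bmatrix}\cos\alpha & \sin\alpha\end{bmatrix}^\top$ and hence has unit norm, while their inner product is $-\cos\theta_i\sin\theta_i + \sin\theta_i\cos\theta_i = 0$. I would then collect them as the columns of $Q_i(t) = \begin{bmatrix} \bm{\varphi}_i(t) & \bar{\bm{\varphi}}_i(t)\end{bmatrix} \in \R^{2\times 2}$. Orthonormality of the columns gives $Q_i^\top Q_i = I_2$, so $Q_i$ is orthogonal and therefore also satisfies $Q_i Q_i^\top = I_2$; expanding $Q_i Q_i^\top$ column-wise reproduces exactly $\bm{\varphi}_i\bm{\varphi}_i^\top + \bar{\bm{\varphi}}_i\bar{\bm{\varphi}}_i^\top$. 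This is nothing but the resolution-of-identity $\sum_k \ub_k\ub_k^\top = I$ for an orthonormal basis $\{\ub_k\}$ of $\R^2$. As a self-contained alternative, one can expand both outer products entrywise and collapse the diagonal via $\cos^2\theta_i + \sin^2\theta_i = 1$ and the off-diagonal via $\cos\theta_i\sin\theta_i - \sin\theta_i\cos\theta_i = 0$.

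I do not anticipate any genuine obstacle: the proposition is an elementary fact about an orthonormal frame in the plane. The single subtlety lies in the rotation-direction bookkeeping of part (i); notably, this does not propagate to part (ii), since negating $\bar{\bm{\varphi}}_i$ leaves the outer product unchanged, so the identity in (ii) is insensitive to the chosen sense of rotation and holds for any unit vector orthogonal to $\bm{\varphi}_i(t)$.
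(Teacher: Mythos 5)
Your proposal is correct, and for part (i) it is essentially the paper's argument: the paper realizes the rotation as the matrix $\begin{bmatrix} \cos(-\pi/2) & \sin(-\pi/2) \\ -\sin(-\pi/2) & \cos(-\pi/2)\end{bmatrix}$ acting on $\bm{\varphi}_i(t)$, which is exactly your angle-shift identities $\cos(\theta_i+\pi/2)=-\sin\theta_i$, $\sin(\theta_i+\pi/2)=\cos\theta_i$ written in matrix form. Your caution about orientation bookkeeping is warranted: the stated formula for $\bar{\bm{\varphi}}_i$ is in fact the \emph{counter-clockwise} image of $\bm{\varphi}_i$ (clockwise rotation would give its negative), and the paper's proof sidesteps this by writing down the particular matrix that produces the stated vector; as you correctly observe, nothing in (ii) or in the later use of $\bar{\bm{\varphi}}_i$ (only orthogonality to $\bm{\varphi}_i$ is ever needed) depends on that sign. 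Where you genuinely depart from the paper is part (ii): the paper's proof is the bare entrywise computation, summing the two outer products and collapsing the diagonal with $\cos^2\theta_i+\sin^2\theta_i=1$, whereas your primary argument assembles $Q_i=\begin{bmatrix}\bm{\varphi}_i & \bar{\bm{\varphi}}_i\end{bmatrix}$, checks $Q_i^\top Q_i=I_2$, and invokes the fact that a square matrix with orthonormal columns also satisfies $Q_iQ_i^\top=I_2$. The paper's route is the shortest calculation; yours is slightly more structural and explains \emph{why} the identity holds --- it is the resolution of identity for any orthonormal basis of $\R^2$, independent of the trigonometric parametrization and of the sense of rotation --- and your entrywise fallback coincides exactly with what the paper does.
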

\begin{proof}
  The proof follows from noticing that $$\bar{\bm{\varphi}}_i(t) = \begin{bmatrix} \cos(-\pi/2) & \sin(-\pi/2) \\ -\sin(-\pi/2) & \cos(-\pi/2)\end{bmatrix} \bm{\varphi}_i(t)$$ and $\cos^2(\theta_i) + \sin^2(\theta_i) = 1$ for all $\theta_i \in [0,2\pi) $.
\end{proof}\\
Multiplying \eqref{rhoEqn1} with $\bar{\bm{\varphi}}^\top_i(t)$ yields
\begin{equation}\label{rhoEqn2}
  \bar{\bm{\varphi}}^\top_i(t)\s_i = \bar{\bm{\varphi}}^\top_i(t)\p(t).
\end{equation}
Note that $\bar{\bm{\varphi}}^\top_i(t)\s_i$ is a scalar known to each agent. Now define
\begin{equation}\label{Heqn}
  \Hb(t) = \begin{bmatrix}
             \hb^\top_1(t) \\ \hb^\top_2(t) \\ \vdots \\ \hb^\top_n(t)
           \end{bmatrix},\quad
  \zb(t) = \begin{bmatrix}
             z_1(t) \\ z_2(t) \\ \vdots \\ z_n(t)
           \end{bmatrix}
\end{equation}
where $\hb^\top_i(t) = \bar{\bm{\varphi}}^\top_i(t)$ is the $i$-th row vector of matrix $\Hb(t)\in\R^{n\times 2}$ and $z_i(t) = \bar{\bm{\varphi}}^\top_i(t)\s_i$ is the $i$-th element of $\zb(t)\in\R^n$. Now \eqref{rhoEqn2} for the entire sensor network can be written as
\begin{equation}\label{LEQ11}
  \zb(t) = \Hb(t)\p(t).
\end{equation}
Thus, estimating the target trajectory corresponds to solving a linear time-varying set of equations. We make the following assumption regarding $\Hb(t)$:
\begin{assumption}\label{Assump1}
  For all $t\geq 0$, $\rank\left(\Hb(t)\right) = 2 < n$.
\end{assumption}
\noindent We aim to find the trajectory $\p(t)$ that minimizes or solves the following optimization problem:
\begin{equation}\label{Opt}
  \min_{\p(t)\,\in\,\R^2}\,\frac{1}{2} \| \zb(t) - \Hb(t)\p(t) \|^2.
\end{equation}
Under Assumption~\ref{Assump1}, the problem \eqref{Opt} has a unique solution:
\begin{equation}\label{yopt}
  \p^*(t) = \left( \Hb^\top(t)\Hb(t) \right)^{-1}\Hb^\top(t)\zb(t).
\end{equation}
In this paper, we aim to develop a distributed algorithm to solve the optimization problem \eqref{Opt} via local interactions dictated by the network topology.

\section{Proposed Distributed Algorithm}\label{sec:Main}
\vspace{-2mm}
In this section, we present a distributed algorithm for solving the least-squares problem \eqref{Opt}. In terms of local quantities, the least-squares solution in \eqref{yopt} can be written as
\begin{equation}\label{yopt1}
  \p^*(t) = \left( \frac{1}{n}\,\sum_{i=1}^{n} \hb_i(t)\hb^\top_i(t) \right)^{-1}\left( \frac{1}{n}\,\sum_{i=1}^{n} \hb_i(t) z_i(t) \right).
\end{equation}
Thus the optimal estimates can be obtained distributedly if the sensors can reach average consensus on $\bar{\Pb}(t) \in\R^{2\times 2}$ and $\bar{\qb}(t)\in\R^{2}$, where
\begin{align*}
  \bar{\Pb}(t) &=  \frac{1}{n}\,\sum_{i=1}^{n} \hb_i(t)\hb^\top_i(t),\quad\text{and}\quad  \bar{\qb}(t) = \frac{1}{n}\,\sum_{i=1}^{n} \hb_i(t) z_i(t).
\end{align*}
In summary, if sensors needs to reach consensus on the symmetric matrix $\bar{\Pb}(t)$ and the vector $\bar{\qb}(t)$, then the optimal solution can be computed as
\begin{equation}\label{yopt1a}
  \p^*(t) = \left( \bar{\Pb}(t) \right)^{-1} \bar{\qb}(t) .
\end{equation}

Here we propose a robust dynamic average-consensus algorithm to reach consensus on the time-varying quantities $\bar{\Pb}(t)$ and $\bar{\qb}(t)$. Toward this goal, we first define
\begin{align}
\Pb_i(t) &= \hb_i(t)\hb^\top_i(t),\label{PiEqn}\\
\qb_i(t) &= z_i(t)\hb_i(t). \label{qiEqn}
\end{align}
Note that quantities $\Pb_i(t)$ and $\qb_i(t)$ are locally available to the sensor. Thus,
$$\bar{\Pb}(t) =  \frac{1}{n}\,\sum\limits_{i=1}^{n} \Pb_i(t)\quad\text{and}\quad\bar{\qb}(t) = \frac{1}{n}\,\sum\limits_{i=1}^{n} \qb_i(t).$$
Construct a vector $\phib_i(t) \in \R^{6}$ containing the $4$ elements of the matrix $\Pb_i(t)$ and $2$ elements of $\qb_i(t)$, i.e.,
\begin{equation}\label{PhiEqn}
  \phib_i(t) = \begin{bmatrix}
               \Vect\left( \Pb_i(t) \right) \\ \qb_i(t)
             \end{bmatrix}.
\end{equation}
Before we proceed, we make the following assumption regarding $z_i(t)$ and $\hb_i(t)$:
\begin{assumption}\label{Assump:Phi}
Signals $z_i(t)$ and $\hb_i(t)$ are bounded and continuously differentiable with bounded derivatives such that there exists a positive constant $\gamma > 0$ such that $\forall\, i\in\AC$
\begin{align}
  &\sup_{\substack{t\in[t_0,\infty)}} \| \dot{\phib}_i(t) \|_{\infty} \leq \gamma < \infty. \label{Eq:Phi:Assump}
\end{align}
\end{assumption}
\begin{remark}
In the context of the tracking example discussed in the previous section, Assumption~\ref{Assump:Phi} corresponds to a known bound on target velocity.
\end{remark}
Here we propose a dynamic average-consensus algorithm that would allow each agent to estimate the time-varying signal
\begin{align}
\bar{{\phib}}(t) = \frac{1}{n} \sum_{i=1}^{n}\,{\phib}_i(t) = {\frac{1}{n} \left(\mathbf{1}^{\top}_n \otimes I_{6}\right) \bm{\phi}(t),}
\end{align}
where $\bm{\phi}(t) \in$ $\mathbb{R}^{n 6} \triangleq \begin{bmatrix} {\phib}^{\top}_1(t) & \ldots & {\phib}^{\top}_n(t)\end{bmatrix}^{\top}$. Assumption~\ref{Assump:Phi} ensures that the rate of change of $\bar{{\phib}}(t)$ is bounded such that the sensors are able to reach consensus on $\bar{{\phib}}(t)$.
Now we make following standing assumption regarding the network topology.
\begin{assumption}\label{Assump:Graph}
The interaction topology of $n$ networked sensors is given as an unweighted connected undirected graph $\mathcal{G}\left(\mathcal{V},\mathcal{E}\right)$.
\end{assumption}

\begin{lemma}\label{Lemma1}
For any strongly connected, weight-balanced graph $\mathcal{G}\left(\mathcal{V},\mathcal{E}\right)$ of order $n$, the graph Laplacian $\mathcal{L}$ is a positive semi-definite matrix with a single eigenvalue at $0$ corresponding to both the left and right eigenvectors $\mathbf{1}^\top_n$ and $\mathbf{1}_n$, respectively.
\end{lemma}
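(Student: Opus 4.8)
The plan is to verify the three asserted properties in turn: that $\mathbf{1}_n$ and $\mathbf{1}_n^\top$ are respectively a right and a left null vector of $\mathcal{L}$, that $\mathcal{L}$ is positive semi-definite, and that the zero eigenvalue is simple. The first two are immediate from the definition $\mathcal{L} = \Delta - \mathcal{A}$, whereas the simplicity of the zero eigenvalue is where the connectivity hypothesis does the real work.

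First I would settle the null-vector claims. Because $\Delta = \diag(\mathcal{A}\mathbf{1}_n)$ places the row sums of $\mathcal{A}$ on its diagonal, we have $\Delta\mathbf{1}_n = \mathcal{A}\mathbf{1}_n$ and hence $\mathcal{L}\mathbf{1}_n = (\Delta - \mathcal{A})\mathbf{1}_n = 0$, so $\mathbf{1}_n$ is a right null vector. The weight-balanced hypothesis asserts that the in-degree equals the out-degree at every node, i.e.\ the column sums of $\mathcal{A}$ coincide with its row sums; this gives $\mathbf{1}_n^\top\Delta = \mathbf{1}_n^\top\mathcal{A}$ and therefore $\mathbf{1}_n^\top\mathcal{L} = 0$, establishing the left null vector.

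For positive semi-definiteness I would pass to the symmetric part $\mathcal{L}_s = \tfrac12(\mathcal{L} + \mathcal{L}^\top)$, which governs the quadratic form since $\xb^\top\mathcal{L}\xb = \xb^\top\mathcal{L}_s\xb$ for every real $\xb$. Using the weight-balanced property to identify the diagonal, a direct expansion yields the sum-of-squares identity $\xb^\top\mathcal{L}\xb = \tfrac12\sum_{i,j} a_{ij}(x_i - x_j)^2 \geq 0$, so $\mathcal{L}$ is positive semi-definite. In the undirected setting of Assumption~\ref{Assump:Graph} one has $\mathcal{L} = \mathcal{L}_s$ and this step is immediate.

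Finally, and this is the crux, I would argue that the zero eigenvalue is simple. If $\mathcal{L}\xb = 0$ then $\xb^\top\mathcal{L}\xb = 0$, and the sum-of-squares identity forces $x_i = x_j$ whenever $a_{ij} \neq 0$, i.e.\ across every edge; strong connectivity then propagates this equality along paths to all nodes, so $\xb \in \spa\{\mathbf{1}_n\}$ and the null space of $\mathcal{L}$ is exactly one-dimensional. The main obstacle is promoting this geometric statement to genuine simplicity of the eigenvalue, because a non-symmetric weight-balanced $\mathcal{L}$ could a priori carry a nontrivial Jordan block at $0$. For the undirected graph of Assumption~\ref{Assump:Graph} this difficulty evaporates, since $\mathcal{L}$ is symmetric and its geometric and algebraic multiplicities coincide; in the general weight-balanced case one instead invokes the irreducibility of $\mathcal{A}$ guaranteed by strong connectivity, so that $\mathcal{L}$ is an irreducible singular M-matrix whose zero eigenvalue is algebraically simple by Perron--Frobenius theory.
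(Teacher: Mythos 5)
Your proof is correct, but note that the paper does not actually prove this lemma at all --- its ``proof'' is a one-line citation to \cite{OlfatiSaber2004}, where the result is established. Your argument is essentially a self-contained reconstruction of what that reference does: the null-vector computations from $\Delta = \diag(\mathcal{A}\mathbf{1}_n)$ and the weight-balanced hypothesis (row sums of $\mathcal{A}$ equal column sums) are exactly right; your passage to the symmetric part $\mathcal{L}_s$ and the sum-of-squares identity is the ``mirror graph'' argument of the cited reference, weight balance being precisely what makes $\mathcal{L}_s$ a valid undirected Laplacian; and propagating $x_i = x_j$ across edges under strong connectivity correctly pins the kernel to $\spa\{\mathbf{1}_n\}$. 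You also deserve credit for flagging, and correctly resolving, the one subtlety such citations usually bury: for a non-symmetric $\mathcal{L}$, a one-dimensional kernel does not by itself rule out a Jordan block at $0$, and your appeal to Perron--Frobenius for the irreducible nonnegative matrix $sI_n - \mathcal{L}$ (with $s$ large enough) closes that gap. An even quicker way to finish that step is available from machinery you already have: if $\mathcal{L}\vb = c\mathbf{1}_n$ with $c \neq 0$, then $0 = \left(\mathbf{1}_n^\top\mathcal{L}\right)\vb = c\,\mathbf{1}_n^\top\mathbf{1}_n = cn \neq 0$, a contradiction, so no generalized eigenvector feeds into the kernel and algebraic simplicity follows directly from the left null vector. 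In short, the paper buys brevity with its citation; your proof buys a complete argument that makes the lemma verifiable without consulting the reference.
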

\begin{proof}
  See \cite{OlfatiSaber2004}.
\end{proof}

\begin{remark}
For all $\mathbf{x}\in\mathbb{R}^n$, such that $\mathbf{1}^{\top}_n\mathbf{x} = 0$, we have $\mathbf{x}^{\top}L\left( \mathcal{L}\mathcal{L} \right)^+\mathbf{x} =  \mathbf{x}^{\top}\mathbf{x} $ and $\mathbf{x}^{\top}\mathcal{L}\,\mathbf{x} \geq  \lambda_2(\mathcal{L})\|\mathbf{x}\|^2_2$. Here $\lambda_2(\mathcal{L})$ denotes the second-smallest eigenvalue of $\mathcal{L}$ or the algebraic connectivity of $\mathcal{G}\left(\mathcal{V},\mathcal{E}\right)$.
\end{remark}

\begin{lemma}\label{Lemma2}
Let $M \triangleq \left(I_n - \displaystyle\frac{1}{n}\mathbf{1}_n\mathbf{1}^{\top}_n\right)$. For any connected undirected network $\mathcal{G}\left(\mathcal{V},\mathcal{E}\right)$ of order $n$, the graph Laplacian $\mathcal{L}$ and the incidence matrix $\mathcal{B}$ satisfy
\begin{align}\label{Eq:M}
  M = \mathcal{L} \left( \mathcal{L }\right)^+ = \mathcal{BB}^{\top}\left(\mathcal{BB}^{\top}\right)^+ = \mathcal{B}\left(\mathcal{B}^{\top}\mathcal{B}\right)^+\mathcal{B}^{\top},
\end{align}
where $\left(\cdot\right)^+$ denotes the generalized inverse.
\end{lemma}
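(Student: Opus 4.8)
The plan is to recognize each of the four matrices in \eqref{Eq:M} as an \emph{orthogonal projector} and to show that they all project onto the same subspace, namely $\{\mathbf{x}\in\R^n : \mathbf{1}_n^\top\mathbf{x}=0\}$, the orthogonal complement of $\spa\{\mathbf{1}_n\}$. Since the orthogonal projector onto a fixed subspace is unique, this immediately forces the four expressions to coincide, which is exactly the claim.

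First I would record the two standard generalized-inverse identities: for any real matrix $A$, the product $AA^+$ is the orthogonal projector onto $\colsp(A)$, and likewise $A(A^\top A)^+A^\top$ is the orthogonal projector onto $\colsp(A)$. Applying the first identity to $A=\mathcal{L}$ shows $\mathcal{L}\mathcal{L}^+$ projects onto $\colsp(\mathcal{L})$; applying it to $A=\mathcal{B}\mathcal{B}^\top$ shows $\mathcal{B}\mathcal{B}^\top(\mathcal{B}\mathcal{B}^\top)^+$ projects onto $\colsp(\mathcal{B}\mathcal{B}^\top)$; and applying the second identity to $A=\mathcal{B}$ shows $\mathcal{B}(\mathcal{B}^\top\mathcal{B})^+\mathcal{B}^\top$ projects onto $\colsp(\mathcal{B})$. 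Finally, $M = I_n - \tfrac{1}{n}\mathbf{1}_n\mathbf{1}_n^\top$ is by inspection symmetric and idempotent with range $\{\mathbf{x} : \mathbf{1}_n^\top\mathbf{x}=0\}$, hence it is the orthogonal projector onto that subspace.

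It then remains to verify the chain of subspace equalities $\colsp(\mathcal{L}) = \colsp(\mathcal{B}\mathcal{B}^\top) = \colsp(\mathcal{B}) = \{\mathbf{x} : \mathbf{1}_n^\top\mathbf{x}=0\}$. The equality $\colsp(\mathcal{B}\mathcal{B}^\top)=\colsp(\mathcal{B})$ holds for any matrix. For the link to $\mathcal{L}$ I would use the incidence relation $\mathcal{L}=\mathcal{B}\mathcal{B}^\top$ (up to an irrelevant positive scalar arising from the two-directed-edges convention, which changes neither the column space nor the projector), giving $\colsp(\mathcal{L})=\colsp(\mathcal{B})$. To pin down $\colsp(\mathcal{B})$, note that every column of $\mathcal{B}$ sums to zero, so $\mathbf{1}_n^\top\mathcal{B}=0$ and therefore $\colsp(\mathcal{B})\subseteq\{\mathbf{x} : \mathbf{1}_n^\top\mathbf{x}=0\}$; connectivity of $\mathcal{G}$ together with Lemma~\ref{Lemma1} gives $\rank(\mathcal{L})=n-1$, whence $\rank(\mathcal{B})=n-1=\dim\{\mathbf{x} : \mathbf{1}_n^\top\mathbf{x}=0\}$, and a dimension count upgrades the inclusion to equality.

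The main obstacle is largely bookkeeping around the generalized inverse: establishing the projector identity for the third form $\mathcal{B}(\mathcal{B}^\top\mathcal{B})^+\mathcal{B}^\top$ needs care, since $\mathcal{B}^\top\mathcal{B}$ is singular and $(\mathcal{B}^\top\mathcal{B})^+$ cannot be replaced by an ordinary inverse. The cleanest route is to pass through a compact singular value decomposition $\mathcal{B}=U\Sigma V^\top$ and check that $\mathcal{L}\mathcal{L}^+$, $\mathcal{B}\mathcal{B}^\top(\mathcal{B}\mathcal{B}^\top)^+$, and $\mathcal{B}(\mathcal{B}^\top\mathcal{B})^+\mathcal{B}^\top$ each reduce to $UU^\top$; equivalently one may verify the four Penrose conditions directly. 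Once these identities and the column-space equalities are in hand, uniqueness of the orthogonal projector onto $\{\mathbf{x} : \mathbf{1}_n^\top\mathbf{x}=0\}$ closes the argument.
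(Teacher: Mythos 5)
Your proof is correct, but it takes a genuinely different route from the paper, which offers no argument of its own: the paper's entire proof is a citation to Lemma~3 of \cite{Gutman04}. Your approach --- recognize $M$, $\mathcal{L}\mathcal{L}^+$, $\mathcal{B}\mathcal{B}^\top(\mathcal{B}\mathcal{B}^\top)^+$, and $\mathcal{B}(\mathcal{B}^\top\mathcal{B})^+\mathcal{B}^\top$ as orthogonal projectors, show via the rank/connectivity argument that each has range $\left\{\mathbf{x}\in\R^n:\mathbf{1}_n^\top\mathbf{x}=0\right\}$, and invoke uniqueness of the orthogonal projector onto a fixed subspace --- is a clean, self-contained replacement, and it buys something the bare citation does not: it forces the paper's nonstandard edge convention into the open. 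Since each undirected edge is counted here as two directed edges, one has $\mathcal{B}\mathcal{B}^\top=2\mathcal{L}$ rather than $\mathcal{L}$ (a fact the paper itself uses silently in the proof of Theorem~\ref{Theorem01}), and your remark that a positive scalar changes neither column spaces nor the associated projectors is exactly what reconciles the lemma with this convention; the cited reference works with the standard single-orientation incidence matrix, so the citation alone leaves that gap. Two points worth making explicit if you flesh this out: (i) your argument requires $(\cdot)^+$ to be the Moore--Penrose pseudoinverse, since for a general $\{1\}$-inverse the product $AA^+$ is only an oblique projector onto $\colsp(A)$ and the chain of equalities in \eqref{Eq:M} would fail --- this is the standard reading, but the paper only says ``generalized inverse''; (ii) the dimension count $\rank(\mathcal{B})=\rank(\mathcal{B}\mathcal{B}^\top)=\rank(\mathcal{L})=n-1$ is precisely where connectivity enters, through Lemma~\ref{Lemma1}, and your proof correctly isolates this as the only place the graph structure is needed.
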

\begin{proof}
See Lemma~3~of~\cite{Gutman04}.
\end{proof}

\vspace{-3mm}
\subsection{Dynamic Average-Consensus Algorithm}
\vspace{-2mm}
Here we propose the following dynamic average-consensus algorithm:
\begin{subequations}\label{RDATsys1}
\begin{align}
    \begin{split}
        \dot{\bm{w}}_i(t) &= - \beta\sum\limits_{j=1}^{n} a_{ij} \sgn \bigg\{ \bm{x}_i(t)-\bm{x}_j(t) \bigg\},\quad \bm{w}_i(t_0),
    \end{split}\label{RDATsys1a}\\
  \bm{x}_i(t) &= \bm{w}_i(t) + \bm{\phi}_i(t)\label{RDATsys1b},
\end{align}
\end{subequations}
where $\bm{w}_i(t) \in \mathbb{R}^{6}$ is the internal states associated with the $i^{\text{th}}$ node, $\bm{x}_i(t) \in \mathbb{R}^{6}$  denotes the $i^{\text{th}}$ node's estimate of $\bar{\phib}(t)$, and $\beta > 0$ is a scalar parameter to be determined. The algorithm in \eqref{RDATsys1} can be rewritten in a compact form:
\begin{subequations}\label{RDATsys2}
\begin{align}
  \dot{\wb}(t) &=  -\beta\, \left( \mathcal{B} \otimes I_{6}\right) \sgn\left\{ \left( \mathcal{B}^{\top} \otimes I_{6}\right){\mathbf{x}}(t)\right\},\quad\wb(t_0)\label{RDATsys2a}\\
  \xb(t) &= \wb(t) + \bm{\phi}(t)\label{RDATsys2b},
\end{align}
\end{subequations}
where $\mathbf{x}(t)$ $\in$ $\mathbb{R}^{n 6}$ $\triangleq$ $\begin{bmatrix} \bm{x}^{\top}_1(t) & \ldots & \bm{x}^{\top}_n(t)\end{bmatrix}^{\top}$ is the estimate of $\bar{\phib}(t)$ for the entire network and $\mathbf{w}(t)$ $\in$ $\mathbb{R}^{n 6}$ $\triangleq$ $\begin{bmatrix} \bm{w}^{\top}_1(t) & \ldots & \bm{w}^{\top}_n(t)\end{bmatrix}^{\top}$ are the internal states of the algorithm for the entire network. Let $\tilde{\mathbf{x}}(t)\triangleq\mathbf{x}(t)-\mathbf{1}_n \otimes \bar{\phib}(t)$ denote the dynamic average-consensus error for the entire network. From \eqref{RDATsys2b} and Lemma~\ref{Lemma2}, we have
\begin{align}\label{AvgConErr}
    \tilde{\mathbf{x}}(t) = \wb(t) + \left( M \otimes I_{6}\right) \bm{\phi}(t).
\end{align}
Convergence analysis of the proposed algorithm is given next.

\begin{figure*}[!ht]
  \begin{centering}
      \subfigure[Simulation scenario]{
      \psfrag{X-location}[][]{\footnotesize{$x$-location}}
      \psfrag{Y-location}[][]{\footnotesize{$y$-location}}
      \includegraphics[width=.3\textwidth]{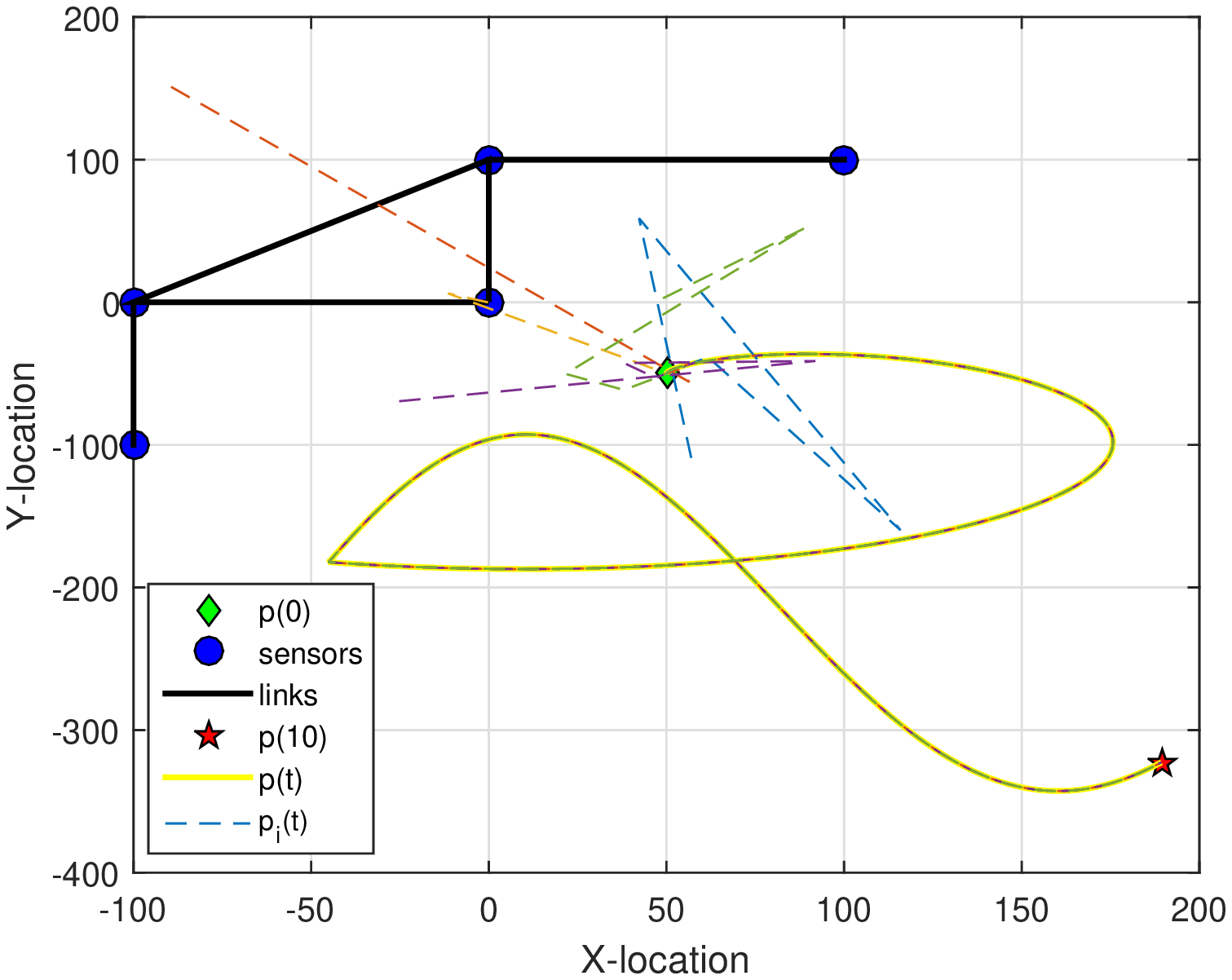}\label{Fig1}}
      \subfigure[Tracking error for all 5 nodes]{
      \psfrag{time}[][]{\footnotesize{$t$}}
      \psfrag{RMSE}[][]{\footnotesize{RMSE}}
      \includegraphics[width=.3\textwidth]{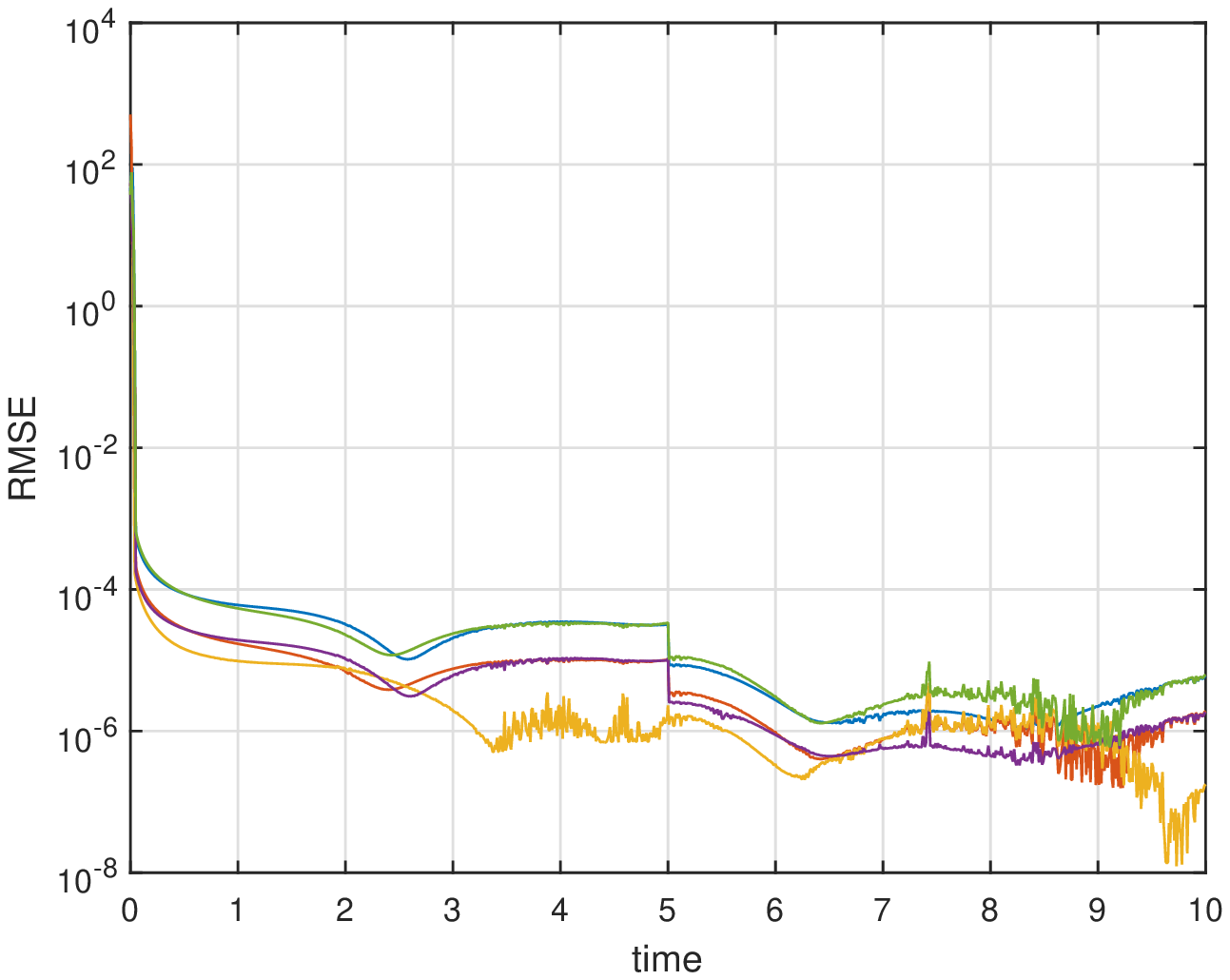}\label{Fig2}}
      \subfigure[Consensus error for individual agents]{
      \psfrag{time}[][]{\footnotesize{$t$}}
      \psfrag{RMSE}[][]{\footnotesize{MSCE}}
      \includegraphics[width=.3\textwidth]{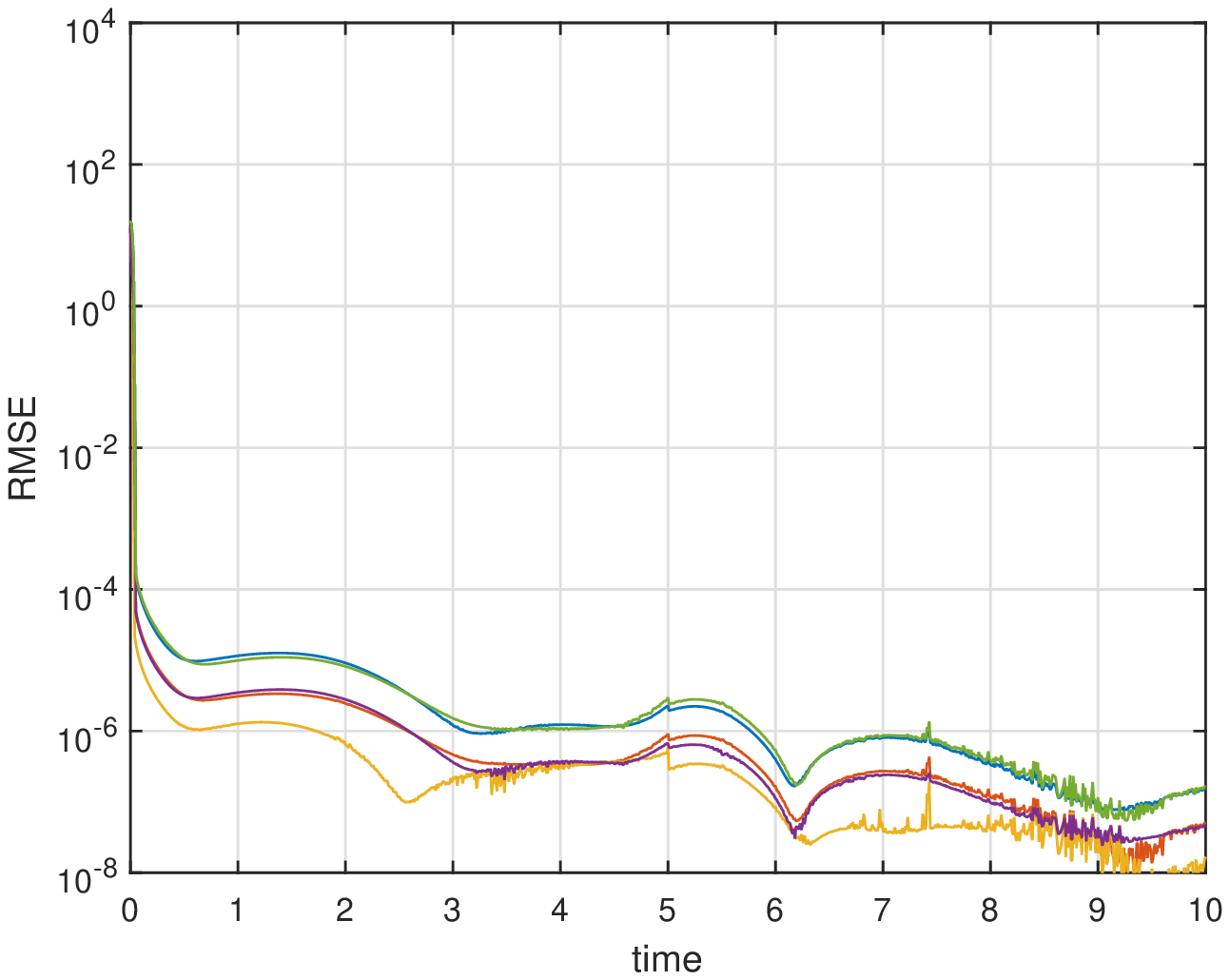}\label{Fig3}}
      \vspace{-0.2cm}
      \caption{Simulation scenario and mean-square errors.}
      \label{Plot2}
  \end{centering}
  \vspace{-0.3cm}
\end{figure*}

\vspace{-3mm}
{\subsection{Convergence Result}}
\vspace{-2mm}
The following theorem illustrates how to select the parameter $\beta$ and the initial conditions $\wb(t_0)$ such that the average-consensus error converges to zero in finite time.

\begin{theorem}\label{Theorem01}
Given Assumptions~\ref{Assump:Phi} and~\ref{Assump:Graph}, the robust dynamic average-consensus algorithm in \eqref{RDATsys2} guarantees that the average-consensus error, $\tilde{\mathbf{x}}(t)$, is globally finite-time convergent, i.e., $\forall\,{\tilde{\mathbf{x}}}(t_0)$, we have $\tilde{\mathbf{x}}(t) = \mathbf{0}$ for all $t\geq t^*$, where
\begin{equation}\label{eq:t*}
t^* = t_0 + \frac{ \| \tilde{\mathbf{x}}(t_0)\|_2}{\lambda_2(L)},
\end{equation}
if $\,\wb(t_0)$ is set to zero and $\beta$ is selected such that
\begin{align}\label{betaEqn}
  \beta \geq 1 + \gamma\, \frac{\sqrt{\hat{n}}}{\hat{\lambda}_2},
\end{align}
where $\hat{n}$ and $\hat{\lambda}_2$ are positive constants such that $\hat{n} \geq n$ and $\hat{\lambda}_2 \leq \lambda_2(L)$, where $\lambda_2(L)$ is the algebraic connectivity of the network.
\end{theorem}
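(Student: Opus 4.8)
The plan is to derive a closed differential equation for the disagreement vector $\tilde{\mathbf{x}}(t)$ and then exhibit a Lyapunov function that decays at a uniformly negative rate, forcing finite-time extinction. First I would differentiate the expression \eqref{AvgConErr} for $\tilde{\mathbf{x}}(t)$ and substitute \eqref{RDATsys2a}, obtaining $\dot{\tilde{\mathbf{x}}} = -\beta(\mathcal{B}\otimes I_6)\sgn\{(\mathcal{B}^{\top}\otimes I_6)\mathbf{x}\} + (M\otimes I_6)\dot{\bm{\phi}}$. Since each column of $\mathcal{B}$ sums to zero, $\mathbf{1}_n^{\top}\mathcal{B}=\mathbf{0}$, so $(\mathcal{B}^{\top}\otimes I_6)(\mathbf{1}_n\otimes\bar{\bm{\phi}})=\mathbf{0}$ and the signum argument can be rewritten with $\tilde{\mathbf{x}}$ in place of $\mathbf{x}$. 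I would then record the crucial invariance: because $\wb(t_0)=\mathbf{0}$, $\mathbf{1}_n^{\top}\mathcal{B}=\mathbf{0}$, and $\mathbf{1}_n^{\top}M=\mathbf{0}$, the average $(\mathbf{1}_n^{\top}\otimes I_6)\tilde{\mathbf{x}}(t)$ is conserved at its initial value $\mathbf{0}$; equivalently $(M\otimes I_6)\tilde{\mathbf{x}}=\tilde{\mathbf{x}}$ for all $t$. This keeps $\tilde{\mathbf{x}}$ in the disagreement subspace, which is exactly where the spectral estimate of the Remark following Lemma~\ref{Lemma1} is available.

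Next I would take $V(t)=\|\tilde{\mathbf{x}}(t)\|_2$ and differentiate along the discontinuous dynamics, interpreting solutions in the Filippov sense and working at points where $\tilde{\mathbf{x}}\neq\mathbf{0}$. The pairing of $\tilde{\mathbf{x}}$ with the signum term telescopes into a one-norm of edge differences, $\tilde{\mathbf{x}}^{\top}(\mathcal{B}\otimes I_6)\sgn\{(\mathcal{B}^{\top}\otimes I_6)\tilde{\mathbf{x}}\}=\|(\mathcal{B}^{\top}\otimes I_6)\tilde{\mathbf{x}}\|_1$, so that $\dot{V}=\big(-\beta\|(\mathcal{B}^{\top}\otimes I_6)\tilde{\mathbf{x}}\|_1+\tilde{\mathbf{x}}^{\top}(M\otimes I_6)\dot{\bm{\phi}}\big)/\|\tilde{\mathbf{x}}\|_2$. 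The remaining work is to bound the two terms in the numerator.

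For the consensus term I would use $\|\cdot\|_1\geq\|\cdot\|_2$ together with the identity $\|(\mathcal{B}^{\top}\otimes I_6)\tilde{\mathbf{x}}\|_2^2=\tilde{\mathbf{x}}^{\top}(\mathcal{L}\otimes I_6)\tilde{\mathbf{x}}$ (Lemma~\ref{Lemma2}), and then apply the Remark's inequality to each of the six stacked coordinate blocks of $\tilde{\mathbf{x}}$, every one of which is orthogonal to $\mathbf{1}_n$ by the invariance; this delivers a lower bound of the form $(\text{const})\,\lambda_2(L)\,\|\tilde{\mathbf{x}}\|_2$. For the disturbance term I would use that $M\otimes I_6$ is an orthogonal projection and invoke Assumption~\ref{Assump:Phi}, which bounds $\|\dot{\bm{\phi}}\|_2$ by $\gamma\sqrt{\hat{n}}$ after absorbing dimensional constants into $\hat{n}\ge n$; this term is then at most $\gamma\sqrt{\hat{n}}\,\|\tilde{\mathbf{x}}\|_2$. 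Combining the two estimates and substituting the prescribed gain \eqref{betaEqn}, $\beta\ge 1+\gamma\sqrt{\hat{n}}/\hat{\lambda}_2$ with $\hat{\lambda}_2\le\lambda_2(L)$, collapses the numerator to at most $-\lambda_2(L)\|\tilde{\mathbf{x}}\|_2$, so that $\dot{V}\le-\lambda_2(L)$ whenever $\tilde{\mathbf{x}}\neq\mathbf{0}$.

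Finally I would integrate this differential inequality through the comparison lemma to get $V(t)\le V(t_0)-\lambda_2(L)(t-t_0)$, which reaches zero no later than the claimed $t^{*}$ in \eqref{eq:t*}; the strict negativity of the drift also shows $\tilde{\mathbf{x}}=\mathbf{0}$ is attractive and invariant (a sliding manifold), so the error remains zero for $t\ge t^{*}$. I expect the main obstacle to sit in the third step: faithfully converting the nonsmooth, edge-wise one-norm generated by the signum into a global two-norm bound carrying the sharp algebraic-connectivity constant $\lambda_2(L)$, while simultaneously making the nonsmooth Lyapunov analysis rigorous (Filippov set-valued derivative) and ensuring the disturbance bound from Assumption~\ref{Assump:Phi} is uniform in time. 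Pinning the exact constant in $t^{*}$ is precisely where the interplay between the $\|\cdot\|_1$ produced by the signum and the $\|\cdot\|_2$ appearing in $V$ must be handled with care.
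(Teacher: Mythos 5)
Your proof skeleton does match the paper's: the same error dynamics, the same zero-average invariance argument (so the spectral bound applies), a norm-type Lyapunov function, and the comparison lemma. The genuine gap sits exactly where you predicted the difficulty would be: the step in which the gain condition \eqref{betaEqn} must dominate the disturbance $(M\otimes I_6)\dot{\bm{\phi}}$. You decouple the signum term and the disturbance term and estimate them independently, and with the correct constants this decoupled estimate does not close. First, the route you state for the consensus term ($\|\cdot\|_1\ge\|\cdot\|_2$ plus the Rayleigh bound) yields $\|(\mathcal{B}^\top\otimes I_6)\tilde{\mathbf{x}}\|_1 \ge \sqrt{2\,\tilde{\mathbf{x}}^\top(\mathcal{L}\otimes I_6)\tilde{\mathbf{x}}} \ge \sqrt{2\lambda_2(\mathcal{L})}\,\|\tilde{\mathbf{x}}\|_2$ (note $\mathcal{BB}^\top = 2\mathcal{L}$ with the paper's doubled-edge convention), i.e.\ the connectivity enters through its \emph{square root}; your claimed bound $(\mathrm{const})\,\lambda_2(\mathcal{L})\,\|\tilde{\mathbf{x}}\|_2$ does not follow from those steps. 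Second, your Cauchy--Schwarz bound on the disturbance needs $\|\dot{\bm{\phi}}\|_2$, and Assumption~\ref{Assump:Phi} only gives $\|\dot{\bm{\phi}}\|_2 \le \gamma\sqrt{6n}$; the factor $\sqrt{6}$ cannot be ``absorbed into $\hat n\ge n$,'' because $\hat n$ is a constant fixed by hypothesis, not a quantity you are free to inflate. Combining your (corrected) estimates, you would need $\beta\sqrt{2\lambda_2} \ge \gamma\sqrt{6n} + \lambda_2$, and \eqref{betaEqn} does not imply this: for the complete graph on $n=5$ nodes ($\lambda_2=5$) with $\gamma=10$, $\hat n = 5$, $\hat\lambda_2 = 5$, condition \eqref{betaEqn} permits $\beta = 1+2\sqrt{5}\approx 5.47$, so $\beta\sqrt{2\lambda_2}\approx 17.3$, while already $\gamma\sqrt{n}\approx 22.4$ (and $\gamma\sqrt{6n}\approx 54.8$). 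With the prescribed gain, your bound on $\dot V$ is not even negative.

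The paper avoids the decoupling entirely, and that is the missing idea. Using Lemma~\ref{Lemma2} it writes $M\otimes I_6 = (\mathcal{B}\otimes I_6)\bigl(\mathcal{B}^\top(\mathcal{BB}^\top)^+\otimes I_6\bigr)$, so the disturbance term carries the \emph{same} factor $\tilde{\mathbf{x}}^\top(\mathcal{B}\otimes I_6)$ as the signum term; a H\"older $1$--$\infty$ pairing then gives
\begin{equation*}
\dot V \le -\beta\,\|\tilde{\mathbf{x}}^\top(\mathcal{B}\otimes I_6)\|_1 + \|\tilde{\mathbf{x}}^\top(\mathcal{B}\otimes I_6)\|_1\,\|\mathcal{B}^\top\|_\infty\,\|(\mathcal{BB}^\top)^+\|_\infty\,\|\dot{\bm{\phi}}\|_\infty,
\end{equation*}
where the $\infty$-norm bound $\gamma$ of Assumption~\ref{Assump:Phi} is used directly (no dimension factor), together with $\|\mathcal{B}^\top\|_\infty = 2$ and $\|(\mathcal{BB}^\top)^+\|_\infty \le \sqrt{n}/(2\lambda_2)$. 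Condition \eqref{betaEqn} then cancels the disturbance edge-wise, leaving $\dot V \le -\|(\mathcal{B}^\top\otimes I_6)\tilde{\mathbf{x}}\|_1$, and only \emph{after} this cancellation is the spectral bound invoked to get finite-time decay. This ratio structure is precisely what the term $\gamma\sqrt{\hat n}/\hat\lambda_2$ in \eqref{betaEqn} is built for, and it is the reason Assumption~\ref{Assump:Phi} is phrased in the $\infty$-norm. (Incidentally, this route produces an exit time $t_0 + \|\tilde{\mathbf{x}}(t_0)\|_2/\sqrt{\lambda_2(\mathcal{L})}$, which is what the paper's proof actually establishes; your final formula reproduces the statement's \eqref{eq:t*} with $\lambda_2$ to the first power, but only by way of the unsupported first-power lower bound, so the coincidence with the stated $t^*$ does not validate the derivation.)
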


\begin{proof}
Note $\left( \mathcal{B}^{\top} \otimes I_{6}\right) \tilde{\mathbf{x}}(t)$ $=$ $\left( \mathcal{B}^{\top} \otimes I_{6}\right) {\mathbf{x}}(t)$. Thus, after substituting \eqref{RDATsys2a}, the error dynamics can be written as
\begin{align*}
    \dot{\tilde{\mathbf{x}}}(t) = -\beta\, \left( \mathcal{B} \otimes I_{6}\right) \sgn\left\{ \left( \mathcal{B}^{\top} \otimes I_{6}\right)\tilde{\mathbf{x}}(t)\right\} + \left( M \otimes I_{6}\right) \dot{\bm{\phi}}(t).
\end{align*}
From \eqref{AvgConErr}, we have $\tilde{\mathbf{x}}(t_0) = \wb(t_0) + \left( M \otimes I_{6}\right) \bm{\phi}(t_0)$. Since $\wb(t_0) = \mathbf{0}$ and $\mathbf{1}^{\top}_n M = \mathbf{0}^{\top}_n$, we have $\left( \mathbf{1}^{\top}_n \otimes I_{6}\right)\tilde{\mathbf{x}}(t_0) = \mathbf{0}_6$. Since $\mathbf{1}^{\top}_n B = \mathbf{0}^{\top}_{\ell}$, we have $\left( \mathbf{1}^{\top}_n \otimes I_{6}\right)\dot{\tilde{\mathbf{x}}}(t) = \mathbf{0}_6$ and therefore, $\left( \mathbf{1}^{\top}_n \otimes I_{6}\right)\tilde{\mathbf{x}}(t) = \mathbf{0}_6$ for all $t\geq t_0$. Thus we have $\tilde{\mathbf{x}}^{\top}(t) \left(\mathcal{L}\otimes I_{6}\right) \tilde{\mathbf{x}}(t) \geq \lambda_2(\mathcal{L})\| \tilde{\mathbf{x}}(t) \|_2^2$.

Now consider a nonnegative function of the form $V = \frac{1}{2} \tilde{\mathbf{x}}^{\top}(t) \tilde{\mathbf{x}}(t)$. Therefore,
\begin{align*}
\dot{V} &= -\beta\tilde{\mathbf{x}}^{\top}(t) \left( \mathcal{B} \otimes I_{6}\right) \sgn\left\{ \left( \mathcal{B}^{\top} \otimes I_{6}\right)\tilde{\mathbf{x}}(t)\right\} + \\& \qquad \tilde{\mathbf{x}}^{\top}(t) \left(\mathcal{B}\otimes I_{6}\right) \left(\mathcal{B}^{\top}\left(\mathcal{BB}^{\top}\right)^+\otimes I_{6}\right) \dot{\bm{\phi}}(t),
\end{align*}
where we substituted \eqref{Eq:M} for $M$. Thus,
\begin{align*}
\dot{V}
&\leq  -\beta \|  \tilde{\mathbf{x}}^{\top}(t) \left(\mathcal{B}\otimes I_{6}\right) \|_1 +  \|  \tilde{\mathbf{x}}^{\top}(t) \left(\mathcal{B}\otimes I_{6}\right) \|_1 \\
& \quad  \times \| \left(\mathcal{B}^{\top}\otimes I_{6}\right) \|_{\infty}  \| \left(\left(\mathcal{BB}^{\top}\right)^+\otimes I_{6}\right) \|_{\infty} \| \dot{\bm{\phi}}(t) \|_{\infty}.
\end{align*}
Note $ \| \left(\mathcal{B}^{\top}\otimes I_{6}\right) \|_{\infty}  = \| \mathcal{B}^{\top} \|_{\infty} = 2\,$ and $\,\| \left(\mathcal{BB}^{\top}\right)^+ \|_{\infty}$  $\leq \frac{\sqrt{n}}{2\lambda_2\left(\mathcal{L}\right)}$. Thus, if $\beta$ is selected such that \eqref{betaEqn} is satisfied, then we have
\begin{align*}
\dot{V} &\leq - \|  \left(\mathcal{B}^{\top}\otimes I_{6}\right)\tilde{\mathbf{x}}(t) \|_1 \leq - \sqrt{ \|  \left(\mathcal{B}^{\top}\otimes I_{6}\right)\tilde{\mathbf{x}}(t) \|_2^2}\\
&\leq - \sqrt{ \tilde{\mathbf{x}}^{\top}(t) \left(\mathcal{B}\otimes I_{6}\right) \left(\mathcal{B}^{\top}\otimes I_{6}\right)\tilde{\mathbf{x}}(t) } \\
&= -\sqrt{ 2 \tilde{\mathbf{x}}^{\top}(t) \left(\mathcal{L}\otimes I_{6}\right) \tilde{\mathbf{x}}(t) } \leq - \sqrt{2}\sqrt{\lambda_2(\mathcal{L})}\sqrt{V}.
\end{align*}
Thus we have $\frac{1}{2\sqrt{V}}\dot{V} \leq -\frac{1}{2} \sqrt{2\lambda_2(\mathcal{L})}$. Now based on the Comparison Lemma (Lemma 3.4 of \cite{khalil2002nonlinear}), $\sqrt{V(t)} \leq \sqrt{V(t_0)} - \frac{1}{2}\sqrt{2\lambda_2(\mathcal{L})}\, t$.
Since $\dot{V}(t)$ is negative definite and $V(t)$ is positive definite, we have $\tilde{\mathbf{x}}(t) = \mathbf{0}$ for all $t \geq t^*$, where $t^* = t_0 + \frac{\| \tilde{\mathbf{x}}(t_0) \|_2}{\sqrt{\lambda_2(\mathcal{L})}}$.
This concludes the proof.
\end{proof}

\begin{remark}
Note that the robust dynamic average-consensus algorithm in \eqref{RDATsys2} only requires a conservative upper-bound $\hat{n}$ and a lower-bound $\hat{\lambda}_2$. Precise values of $n$ and $\lambda_2(L)$ are not assumed known. There exist several works \cite{GARIN201213,Terelius2012,Charalambous2016} that propose distributed algorithms to estimate the bounds on network size and the algebraic connectivity of the network.
\end{remark}

\vspace{-3mm}
\subsection{Distributed Tracking Algorithm}
\vspace{-2mm}
The dynamic average-consensus algorithm given in \eqref{RDATsys1} guarantees that $\forall i\in\AC$, $\bm{x}_i(t) = \bar{\phib}(t)$ for all $t\geq t^*$, where $t^*$ is given in \eqref{eq:t*}. Let $\Pb_{\bm{x}_i}(t)$ denoted the $2\times 2$ symmetric matrix constructed from the first $4$ entries of $\bm{x}_i(t)$. Also, let $\qb_{\bm{x}_i}(t)$ denoted the $2\times 1$ vector constructed from the last $2$ entries of $\bm{x}_i(t)$. Now each agent computes the least-squares solution as
\begin{equation}\label{SolnOpty}
  \p_i(t) = \left( \Pb_{\bm{x}_i}(t) \right)^{-1} \qb_{\bm{x}_i}(t).
\end{equation}
A summary of the proposed algorithm is given in Algorithm~\ref{Algorithm1}. Now we have the following result:

 \begin{algorithm}
 \caption{Distributed tracking algorithm}
 \label{Algorithm1}
 \begin{algorithmic}[2]
  \STATE \textit{Initialization} :$\quad \wb(t_0) = \mathbf{0}_{6n}$
  \FOR {$t\geq t_0$}
  \FOR {$i = 1$ to $n$}
  \STATE \textit{Obtain}: $z_i(t)$ \& $\hb_i^\top(t)$
  \STATE $\Pb_i(t) = \hb_i(t)\hb^\top_i(t)$
  \STATE $\qb_i(t) = z_i(t)\hb_i(t)$
  \STATE $\phib_i(t) = \begin{bmatrix} \Vect\left( \Pb_i(t) \right) \\ \qb_i(t) \end{bmatrix}$
  \STATE $\bm{x}_i(t) =  \bm{w}_i(t) + \phib_i(t)$
  \STATE $\dot{\bm{w}}_i(t) = - \beta\sum\limits_{j=1}^{n} a_{ij} \sgn \bigg\{ \bm{x}_i(t)-\bm{x}_j(t) \bigg\}$
  \STATE $\Pb_{\bm{x}_i}(t)$ $\Leftarrow$ $\left[ \bm{x}_i(t) \right]_{1:4}$
  \STATE $\qb_{\bm{x}_i}(t)$ $\Leftarrow$ $\left[ \bm{x}_i(t) \right]_{5:6}$
  \STATE $\p_i(t) = \left( \Pb_{\bm{x}_i}(t) \right)^{-1} \qb_{\bm{x}_i}(t)$ \label{Step13}
  \ENDFOR
  \ENDFOR
 \end{algorithmic}
 \end{algorithm}
\vspace{-2mm}
\begin{theorem}\label{Theorem02}
Given Assumptions~\ref{Assump1}, \ref{Assump:Phi}, and \ref{Assump:Graph}, the proposed distributed approach 
guarantees that the individual solutions $\p_i(t)$ converges to the optimal solution $\p^*(t)$ in finite time, i.e., for all $t\geq t^*$,
\begin{equation}\label{Eqn:end}
  \p_i(t) =  \p^*(t),\quad \forall i\in\AC,
\end{equation}
where $t^*$ is given in \eqref{eq:t*}.
\end{theorem}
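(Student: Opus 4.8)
The plan is to chain Theorem~\ref{Theorem01} with the algebraic identity \eqref{yopt1a}: finite-time consensus forces every node's held value to coincide with the true network average $\bar{\phib}(t)$, and the least-squares formula then reproduces the optimal estimate verbatim. So the first step is to apply Theorem~\ref{Theorem01} under Assumptions~\ref{Assump:Phi} and~\ref{Assump:Graph} to conclude that $\tilde{\mathbf{x}}(t) = \mathbf{0}$ for all $t \geq t^*$, with $t^*$ as in \eqref{eq:t*}. By the definition $\tilde{\mathbf{x}}(t) = \mathbf{x}(t) - \mathbf{1}_n \otimes \bar{\phib}(t)$, this is equivalent to $\bm{x}_i(t) = \bar{\phib}(t)$ for every $i \in \AC$ and all $t \geq t^*$.

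Next I would decode the consensus value block by block. From \eqref{PhiEqn}, \eqref{PiEqn}, and \eqref{qiEqn}, the first four components of $\phib_i(t)$ are $\Vect(\Pb_i(t))$ and the last two are $\qb_i(t)$; since averaging is linear, the first four components of $\bar{\phib}(t)$ equal $\Vect(\bar{\Pb}(t))$ and the last two equal $\bar{\qb}(t)$. Each agent reconstructs $\Pb_{\bm{x}_i}(t)$ and $\qb_{\bm{x}_i}(t)$ from exactly these two blocks of $\bm{x}_i(t)$, so the equality $\bm{x}_i(t) = \bar{\phib}(t)$ yields $\Pb_{\bm{x}_i}(t) = \bar{\Pb}(t)$ and $\qb_{\bm{x}_i}(t) = \bar{\qb}(t)$ for all $i$ and all $t \geq t^*$.

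I would then secure well-posedness of the inversion in \eqref{SolnOpty}. Because $\bar{\Pb}(t) = \frac{1}{n}\sum_{i=1}^n \hb_i(t)\hb_i^\top(t) = \frac{1}{n}\Hb^\top(t)\Hb(t)$, Assumption~\ref{Assump1} forces $\rank(\Hb(t)) = 2$ and hence renders $\bar{\Pb}(t)$ a nonsingular $2\times 2$ matrix for every $t$. Substituting into \eqref{SolnOpty} gives
\begin{equation*}
  \p_i(t) = \left(\Pb_{\bm{x}_i}(t)\right)^{-1}\qb_{\bm{x}_i}(t) = \left(\bar{\Pb}(t)\right)^{-1}\bar{\qb}(t) = \p^*(t),
\end{equation*}
where the final equality is precisely \eqref{yopt1a}; this holds for every $i \in \AC$ and all $t \geq t^*$, which is \eqref{Eqn:end}.

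Since every ingredient is supplied by earlier results, there is no genuine analytic obstacle here — the argument is essentially a decoding step. The one point requiring care is the invertibility claim: I must confirm that Assumption~\ref{Assump1} guarantees $\bar{\Pb}(t)$ is nonsingular for \emph{all} $t$ in the operative range, not merely at the single instant $t^*$, so that $\p_i(t)$ in \eqref{SolnOpty} is well-defined throughout and the identity with $\p^*(t)$ is meaningful for every $t \geq t^*$.
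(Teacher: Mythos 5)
Your proposal is correct and follows essentially the same route as the paper's own proof: invoke Theorem~\ref{Theorem01} to get $\bm{x}_i(t)=\bar{\phib}(t)$ for $t\geq t^*$, decode the blocks to obtain $\Pb_{\bm{x}_i}(t)=\bar{\Pb}(t)$ and $\qb_{\bm{x}_i}(t)=\bar{\qb}(t)$, and conclude via \eqref{yopt1a}. Your explicit check that Assumption~\ref{Assump1} makes $\bar{\Pb}(t)=\tfrac{1}{n}\Hb^\top(t)\Hb(t)$ nonsingular for all $t$ is a detail the paper leaves implicit, but it is the same argument, just spelled out more carefully.
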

\begin{proof}
It follows from the finite-time convergence of the dynamic average-consensus algorithm that for all $t\geq t^*$, $\Pb_{\bm{x}_i}(t) = \bar{\Pb}(t)$ and $ \qb_{\bm{x}_i}(t) = \bar{\qb}(t).$ Thus, for all $t\geq t^*$,
\begin{equation*}
  \p_i(t) =  \left( \bar{\Pb}(t) \right)^{-1} \bar{\qb}(t) = \p^*(t),\quad \forall i\in\AC.
\end{equation*}
\end{proof}
\vspace{-2mm}
\begin{remark}
It is important to realize that here it is assumed that a dynamic model for $\p(t)$ is not available to any of the sensors. Obviously, if such information is available, a recursive filter such as a Kalman filter may be employed.
\end{remark}

\section{Numerical Results}\label{sec:sim}
\vspace{-1mm}
Consider the problem of distributed tracking of a maneuvering target using bearing measurements. Figure~\ref{Fig1} depicts the simulation scenario considered, where the sensors are denoted as blue circles, the communication links between the sensors are represented as solid black lines, and the starting and end points of the target trajectory are denoted as a green diamond and red star, respectively. Here $t_0 = 0$ and $t_f = 10$. For the entire duration of simulation, the true target trajectory is given in Fig.~\ref{Fig1} as a thick, solid, yellow line while the individual sensor estimates are given as thin dashed lines. For numerical simulations, we select $\gamma = 10^2$, $\hat{n}=5$, and $\hat{\lambda}_2 = 0.4$.

Notice that the large initial errors in individual estimates are due to the initial error $\tilde{\mathbf{x}}(0)$. Figure~\ref{Fig2} contains the root-mean-square error (RMSE) for the individual sensors for the simulation. Here RMSE of the $i$-th agent is calculated as
$$\text{RMSE}_i(t) = \sqrt{ \frac{1}{2} \left(\p^*(t) - {\p}_i(t)\right)^\top \left(\p^*(t) - {\p}_i(t)\right) }.$$

Figure~\ref{Fig2} indicates that the agents are able to precisely estimate the target trajectory despite the initial error. Note that the non-zero tracking error in the order of $10^{-6}$ is due to the selected integration step size and it can be further decreased by selecting a smaller step size. Figure~\ref{Fig3} contains the mean-square-consensus error (MSCE) for the individual agents calculated as
$$\text{MSCE}_i(t) = \sqrt{ \frac{1}{6} \left(\bar{\phib}(t) - {\bm{x}}_i(t)\right)^\top\left(\bar{\phib}(t) - {\bm{x}}_i(t)\right) }.$$ 

\section{Conclusion}\label{sec:con}

Here we presented a novel distributed algorithm that allows the networked agents to precisely track a highly maneuvering target from bearing measurements. The proposed scheme, built on the dynamic average consensus algorithm, guarantees that the tracking error obtained by individual agents converges to zero in finite time. The proposed continuous-time formulation can be extended to discrete-time scenarios after replacing the discontinuous signum function with an appropriate continuous approximation such as a saturation function. Future research include extending the current approach to highly noisy scenarios and considering privacy preserving event-triggered communication schemes.

\bibliography{Biblio}
\bibliographystyle{IEEEtran}

\end{document}